\title{Piecewise-linear pseudodiagrams\footnote{Keywords: shadow, pseudoknot, weighted resolution set, piecewise linear}}
\author{Molly Durava\footnote{North Central College undergraduate}\\
        North Central College\\  
        madurava$@$noctrl.edu\\      
\\
        Neil R. Nicholson\\
	    North Central College\\
	    nrnicholson$@$noctrl.edu\\
\\
	    Jackson Ramsey\footnote{North Central College undergraduate Lederman Scholar}\\
	    North Central College\\
	    jsramsey$@$noctrl.edu
	}
\newtheoremstyle{dotless}{}{}{\itshape}{}{\bfseries}{}{ }{}
\def\Real{\hbox{I\kern-.1667em\hbox{R}}}
\theoremstyle{dotless}	
\newtheorem{theorem}{Theorem}[section]
\newtheorem{corollary}[theorem]{Corollary}
\newtheorem{lemma}[theorem]{Lemma}
\newtheorem{definition}[theorem]{Definition}
\begin{document}

\maketitle

\begin{abstract}
There are $2^n$ possible resolutions of a smooth pseudodiagram with $n$ precrossings.  If we consider piecewise-linear (PL) pseudodiagrams and resolutions that themselves are PL, certain resolutions of the pseudodiagram may not exist in $\mathbb{R}^3$.  We investigate this situation and its impact on the weighted resolution set of PL pseudodiagrams as well as introduce a concept specific to PL pseudodiagrams, the forcing number.  Our main result classifies the PL shadows whose weighted resolution sets differ from the weighted resolution set that would exist in the smooth case.
\end{abstract}



\section{Introduction}
There have been various investigations into properties of smooth pseudoknots and their resolutions \cite{3,4,5,6,7}, but here we wish to focus our attention on those that are piecewise-linear (PL).

\begin{definition}
A \textit{pseudodiagram} is a knot diagram that may be missing some classical crossing information, with those crossings being called \textit{precrossings}.  If a pseudodiagram has no classical crossings, then it is called a \textit{shadow}.  An assignment of crossing information to every precrossing in a pseudodiagram is called a \textit{resolution} of the pseudodiagram.  See Fig. \ref{fig1}.
\end{definition}

\begin{figure}[ht!]
    \begin{center}

      \subfigure[]{%
         \label{fig:first}
        \includegraphics[width=0.2\textwidth]{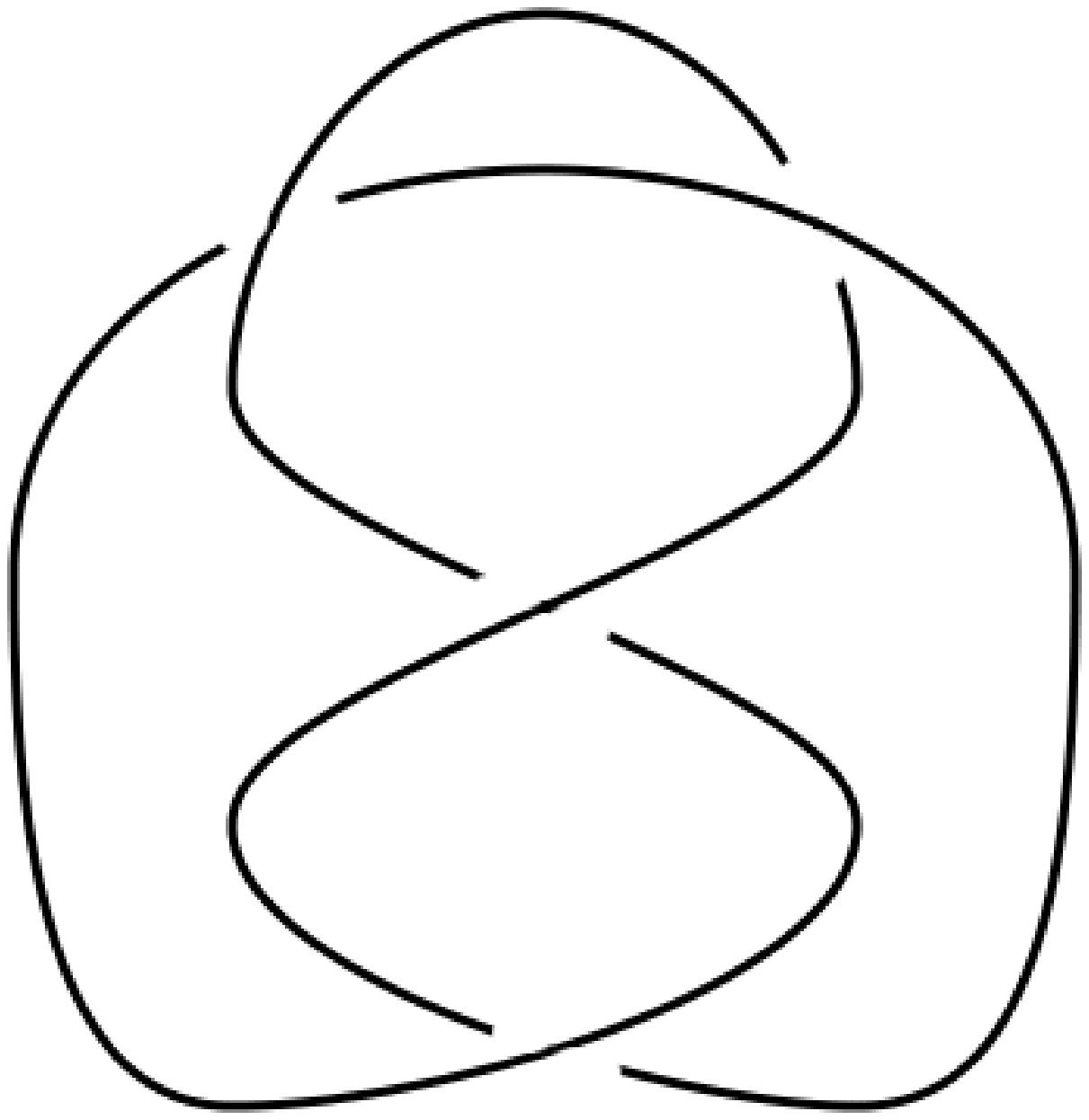}
        }%
        \subfigure[]{%
           \label{fig:second}
           \includegraphics[width=0.2\textwidth]{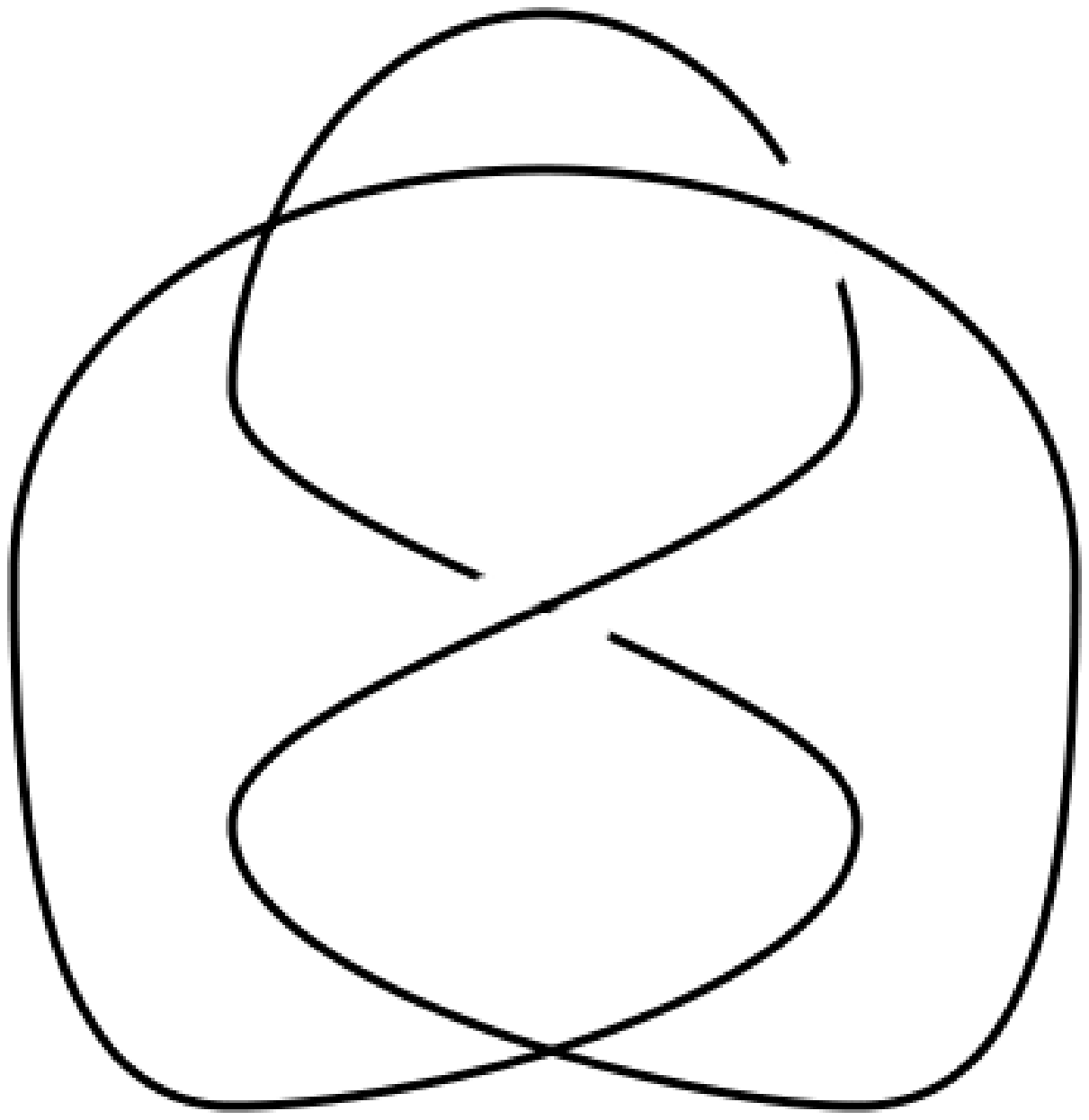}
        } 
        \subfigure[]{%
            \label{fig:third}
            \includegraphics[width=0.2\textwidth]{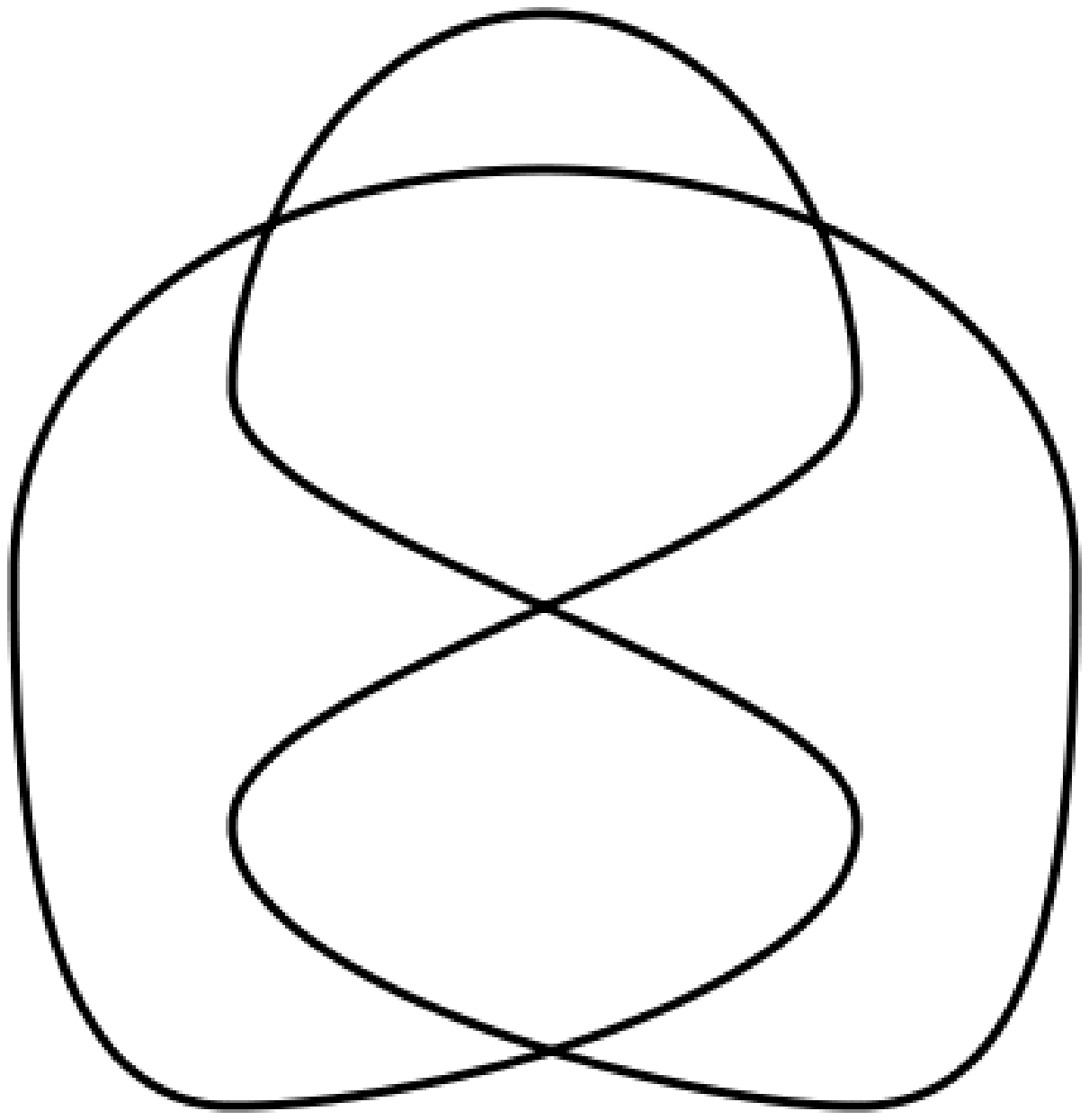}
        }%

    \end{center}
    \vspace*{8pt}
    \caption{Three pseudodiagrams.  Figure (a) is a resolution of both (b) and (c), with (c) being the shadow of the other two.}
   \label{fig1}
\end{figure}

In general, the resolutions of a piecewise-linear pseudodiagram need not themselves be PL diagrams.  However, for the purposes of this paper, we will require that they are.  This insistence is natural: a PL shadow is resolved to a PL knot.  

Smooth pseudodiagrams with $n$ precrossings have $2^n$ resolutions that exist in $\mathbb{R}^3$.  PL pseudodiagrams may not, however. 

\begin{definition}
A resolution of a PL pseudodiagram is called \textit{realizable} if it exists in $\mathbb{R}^3$ and \textit{nonrealizable} if it does not.
\end{definition}

Figure \ref{5star} is one example of a shadow and a nonrealizable resolution of it \cite{9}.  Theorem \ref{badshadowtheorem} classifies the other shadows that have nonrealizable resolutions.

\begin{figure}[ht!]
    \begin{center}

      \subfigure[]{%
         \label{fig:first}
        \includegraphics[width=0.2\textwidth]{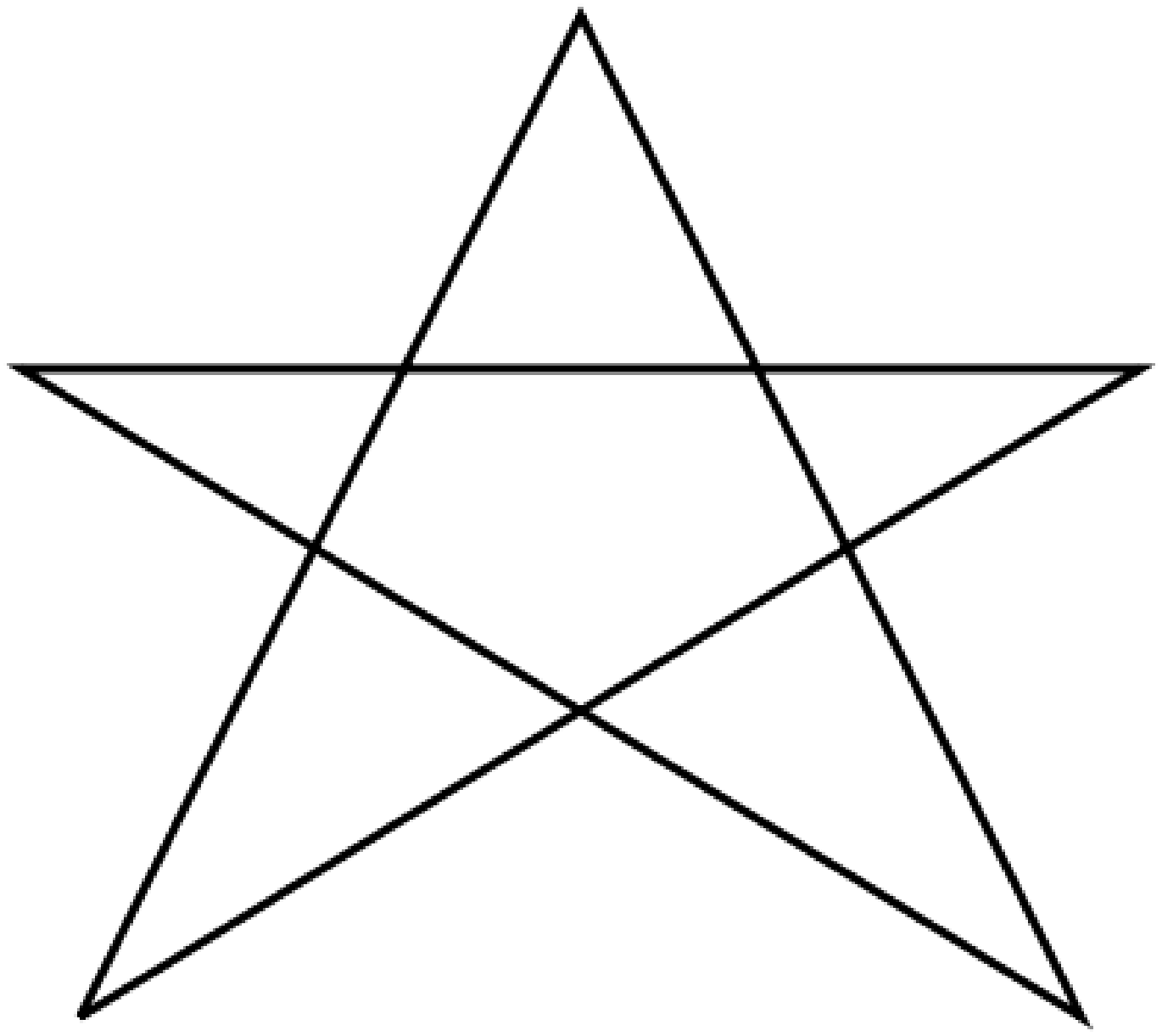}
        }
        \subfigure[]{%
           \label{fig:second}
           \includegraphics[width=0.2\textwidth]{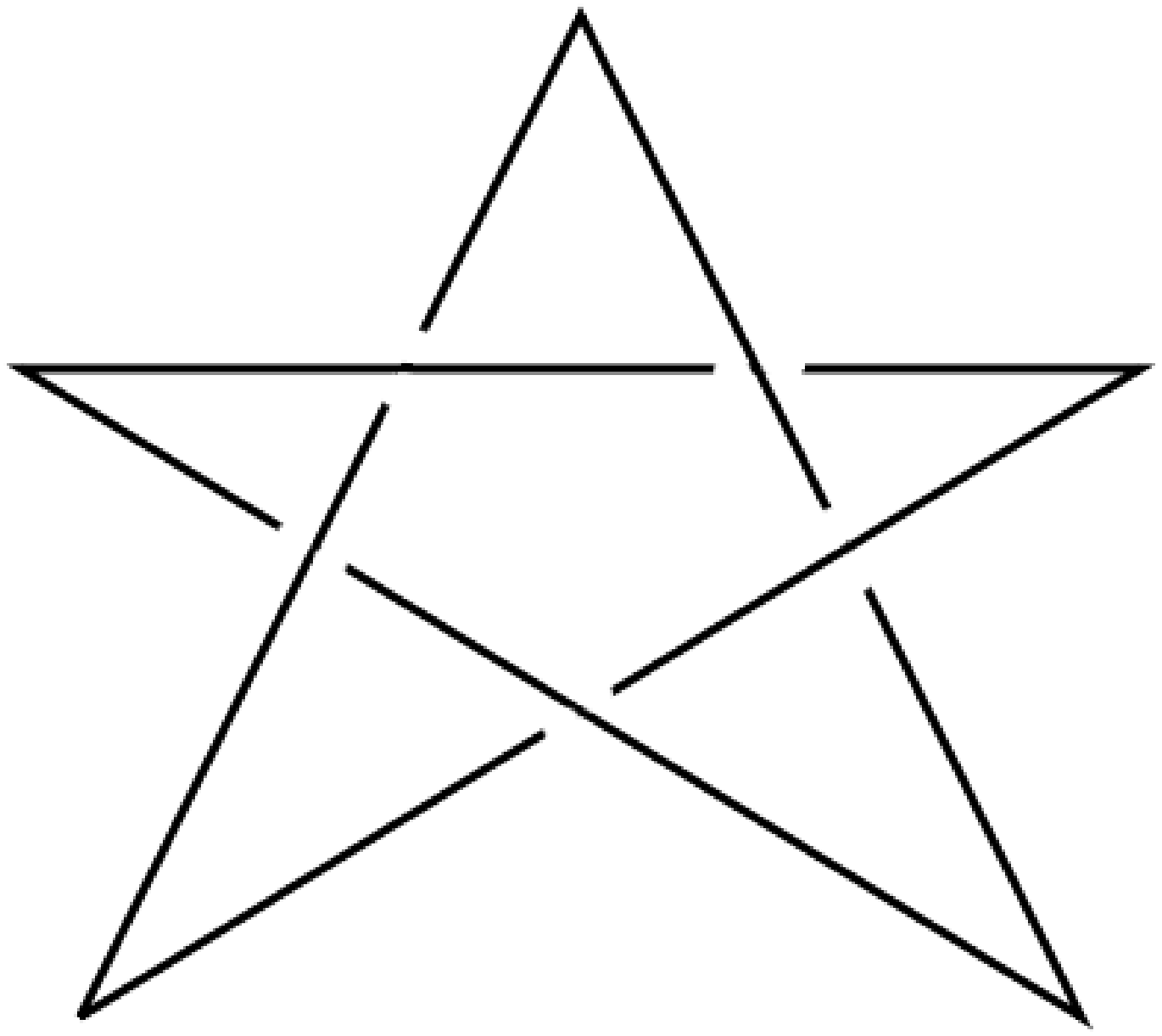}
        } 
    \end{center}
    \caption{A shadow and a nonrealizable resolution}
   \label{5star}
\end{figure}

The remainder of this paper is an investigation into weighted resolution sets and forcing numbers of PL shadows.  Weighted resolutions sets are an extension of the definition first appearing in \cite{4} but require exploration due to nonrealizable resolutions.  Next, we further explore the notion of realizability by introducing the forcing number for a diagram.  We conclude with possible directions for future work.

\section{Weighted Resolution Sets and Forcing Number} \label{WeReSets}

The notion of a weighted resolution set for a pseudodiagram was introduced by Henrich et al \cite{4}.  Because some resolutions of PL pseudodiagrams may be nonrealizable, we must adjust their definition.	

\begin{definition}
The \textit{weighted resolution set} (WeRe-set) of a PL pseudodiagram $D$ is the set of all ordered pairs $(K,p_K)$ and $(\emptyset, p_{\emptyset})$, where $K$ is a realizable resolution of $D$ and $p_K$ is the probability that $K$ is obtained from $D$ by randomly assigning crossing information to every precrossing in $D$ (with either assignment of crossing information to a precrossing being equally likely) and $p_{\emptyset}$ is the probability that the resolution is not realizable.
\end{definition}

A quick sketch of the 32 resolutions of Fig. \ref{5star}(a) shows that the shadow has WeRe-set $\{(0_1,\frac{20}{32}),(\emptyset,\frac{12}{32})\}$.  In the smooth case, the WeRe-set would be $\{(0_1,\frac{20}{32}),(3_1, \frac{10}{32}),(5_1, \frac{2}{32}) \}$ \cite{4}.  Besides the difference of PL resolutions being nonrealizable, note that in the smooth case the knot $5_1$ occurs as a resolution.  Because a nontrivial PL knot requires at least six edges \cite{8,10}, we know that the shadow of Fig. \ref{5star} cannot be resolved to a PL diagram of $5_1$.

For reference, we calculate here the WeRe-set for each of the shadows appearing in Fig. \ref{wereshadows}.  Figure \ref{wereshadows}(a) has WeRe-set $\{(0_1,\frac{6}{8}),(\emptyset,\frac{2}{8})\}$, while Fig. \ref{wereshadows}(b) has WeRe-set $\{(0_1,\frac{20}{32}),(3_1,\frac{2}{32}),(\emptyset,\frac{10}{32}) \}$ and Fig. \ref{wereshadows}(c) has WeRe-set $\{(0_1,\frac{20}{32}),(3_1,\frac{10}{32}),(\emptyset,\frac{2}{32}) \}$.  We note that all of these particular shadows have nonrealizable resolutions and this leads to a natural question: can we classify the shadows with such a property?  That is, which shadows, when considered in the PL sense, have a WeRe-set that differs if we were to consider the shadow in the smooth sense?  Lemma \ref{badshadowlemma} provides four such cases.

\begin{figure}[ht!]
    \begin{center}

      \subfigure[]{%
         \label{fig:first}
        \includegraphics[width=0.2\textwidth]{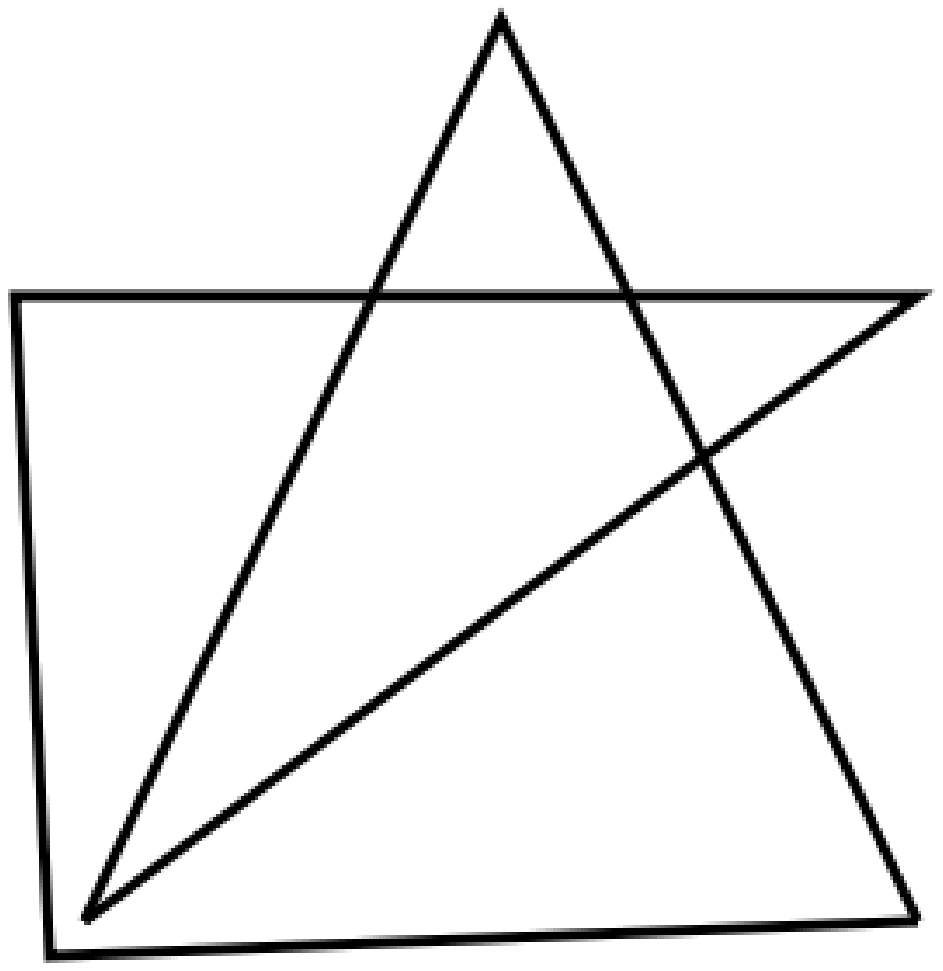}
        }
        \subfigure[]{%
           \label{fig:second}
           \includegraphics[width=0.2\textwidth]{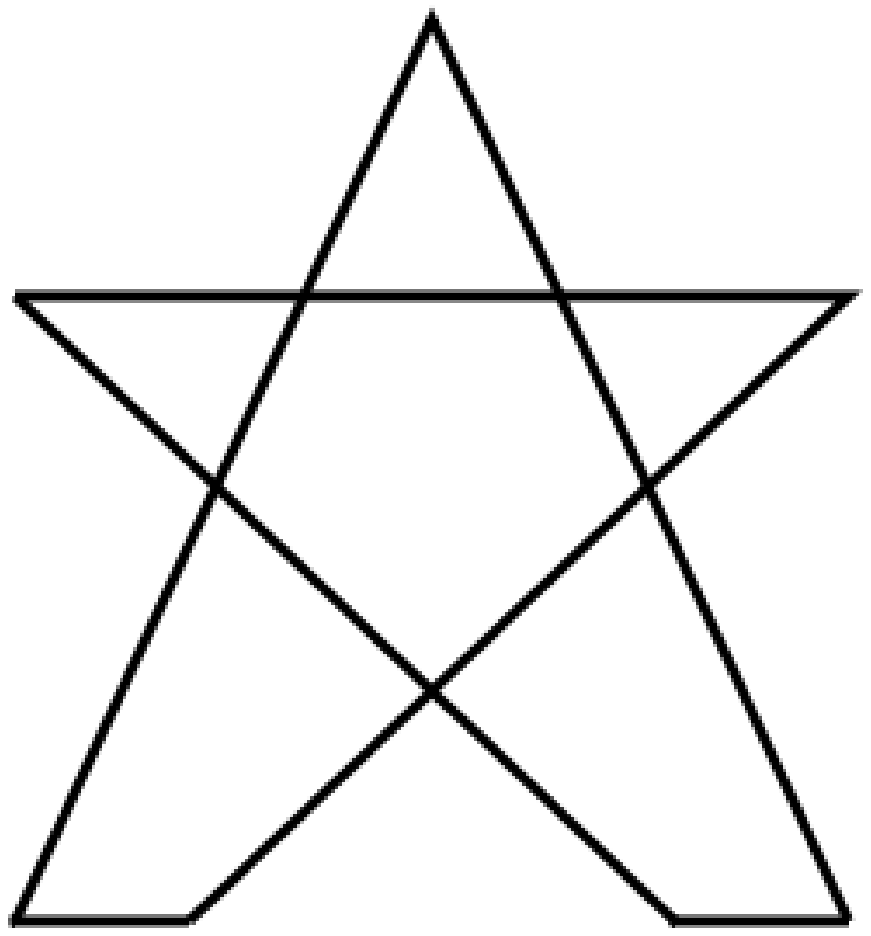}
        } 
        \subfigure[]{%
            \label{fig:third}
            \includegraphics[width=0.2\textwidth]{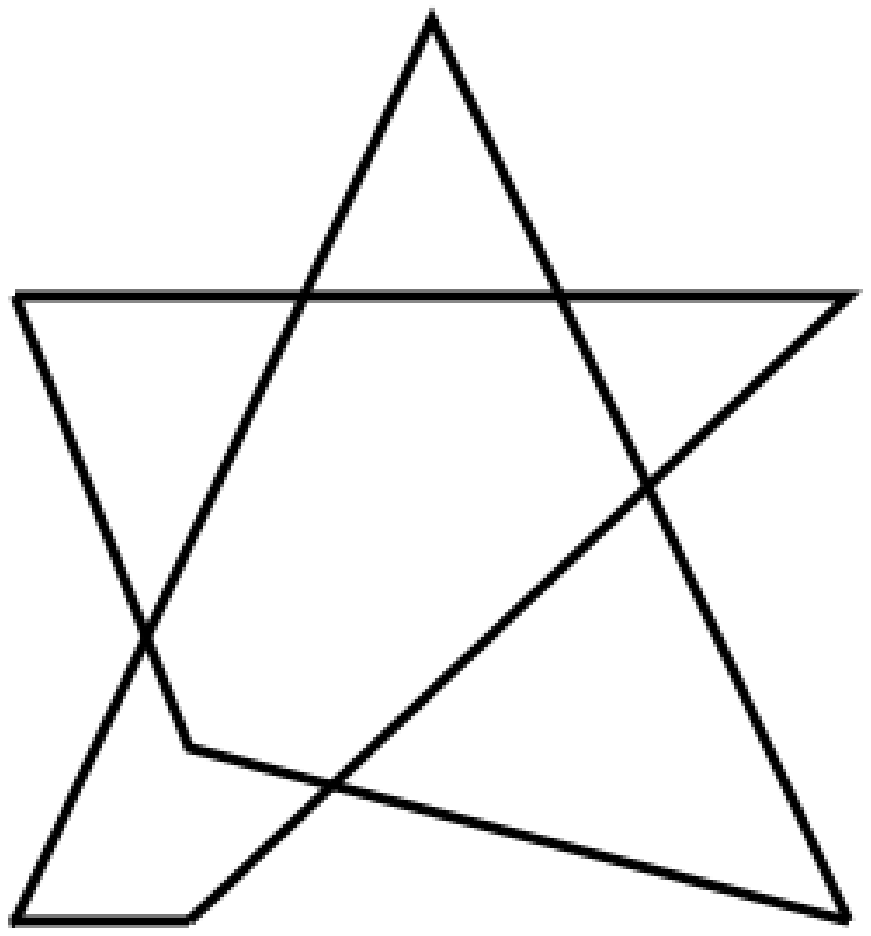}
        }
    \end{center}
    \caption{Shadows with calculated WeRe-sets}
   \label{wereshadows}
\end{figure}

\begin{lemma} \label{badshadowlemma}
If $S$ is a PL shadow that has a portion of it isotopic to one of the diagrams in Fig. \ref{badshadows} (or a mirror image of such a diagram), then $S$ has nonrealizable resolutions, and hence, the WeRe-set for $S$ differs from the WeRe-set when $S$ is considered to be a shadow of a smooth knot.
\end{lemma}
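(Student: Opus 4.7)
The plan is to treat each of the four diagrams in Fig. \ref{badshadows} as a base case, exhibiting for each one a specific crossing assignment whose resulting PL resolution cannot be realized in $\mathbb{R}^3$. Once this is done for the base cases, the lemma follows by a simple extension argument: if $S$ contains a portion isotopic to one of these diagrams, fix the nonrealizable assignment inside that portion and let the precrossings of $S$ outside the portion take any value. Each such extension produces a nonrealizable resolution of $S$, contributing a strictly positive $p_\emptyset$ to the WeRe-set of $S$. Since the smooth WeRe-set has no $(\emptyset, p_\emptyset)$ entry, this already forces a difference, and the mirror-image case is handled identically by reflecting the chosen crossing assignment.

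To certify nonrealizability of the base cases I would lean on the stick-number bound already cited in the paper: every nontrivial PL knot needs at least six edges \cite{8,10}. For each base diagram I would select the crossing assignment whose realization would force the polygonal curve supporting the diagram to contain a knotted closed loop (or a knotted arc that must close in a prescribed way inside $S$) built from fewer than six sticks. The 5-star of Fig. \ref{5star} is the prototype: its nonrealizability is recorded in \cite{9}, so one of the four base cases is already handled by direct citation. For the remaining base diagrams I expect the same pattern: count the edges of $S$ participating in the chosen portion, note how many crossings they must exhibit in the given resolution, and observe that the forced local knot type is incompatible with that stick count.

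A useful intermediate step is to isolate each base portion inside a topological disk $\Delta$ whose boundary meets $S$ transversally in finitely many points. A PL realization of $S$ with the prescribed resolution inside $\Delta$ projects onto a diagram whose local crossing pattern inside $\Delta$ is the one specified. This reduces the global question to a purely local one about how many sticks can lie above or below a given set of strands in $\mathbb{R}^3$, which I would resolve by the edge-count argument just described. Extending the local nonrealizability to a nonrealizability of $S$ then just requires observing that any PL embedding of $S$ restricts to a PL embedding of the $\Delta$-portion.

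The main obstacle is the geometric case analysis for the three base diagrams not already covered by \cite{9}. The delicate point will be ruling out every possible PL placement of a small number of sticks that realizes the specified crossing pattern, not merely the most obvious one. I plan to argue by contradiction: assume a realization exists, count its sticks and crossings in the distinguished portion, and show that the forced subknot (or knotted arc together with its closure through the rest of $\Delta$) would furnish a nontrivial PL knot on at most five edges, contradicting \cite{8,10}. Handling the mirror images at the end is essentially free, since reflecting $\mathbb{R}^3$ carries a nonrealizable PL configuration to a nonrealizable one.
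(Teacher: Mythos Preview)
Your overall architecture---exhibit one nonrealizable resolution for each of the four local configurations, then extend to any $S$ containing such a portion---matches the paper's one-line proof, and the extension step is fine. The gap is in your proposed mechanism for certifying the base cases. The stick-number bound is the wrong tool: the diagrams in Fig.~\ref{badshadows} are \emph{portions} of shadows (open arcs or tangles, not closed polygons), so there is no knot type attached to them and no way to invoke the six-edge theorem. Your fix of closing the arcs ``through the rest of $\Delta$'' does not rescue this, since any such closure adds edges (destroying the $\le 5$ stick count you need) and need not yield a nontrivial knot in any case. Note too that the five-pointed star of Fig.~\ref{5star} is a full closed shadow, not one of the four local base cases in Fig.~\ref{badshadows}, so the citation to \cite{9} for that example does not dispatch any of the cases you actually need.

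The real obstruction is geometric rather than topological, and this is what the paper's proof relies on (simply displaying Fig.~\ref{badshadowresolutions} and leaving the verification to inspection and to \cite{9}): two adjacent edges $\overline{e_i},\overline{e_{i+1}}$ of any PL realization span a plane $P\subset\mathbb{R}^3$, and a single straight segment can meet $P$ in at most one point. Each configuration in Fig.~\ref{badshadows} is arranged so that the indicated crossing data force some edge (or the common vertex of two adjacent edges) to lie strictly on both sides of such a plane $P$---an immediate contradiction that has nothing to do with knottedness or edge counts. If you replace your stick-number step with this plane argument (which is exactly the content elaborated in Theorem~\ref{badshadowtheorem} and \cite{9}), the rest of your outline goes through.
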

\begin{proof}
Observe that the resolutions of the PL shadows of Fig. \ref{badshadows} that appear in Fig. \ref{badshadowresolutions}, respectively, are not possible in $\mathbb{R}^3$.\\
\end{proof}

\begin{figure}[ht!]
    \begin{center}

      \subfigure[]{%
         \label{fig:first}
        \includegraphics[width=0.2\textwidth]{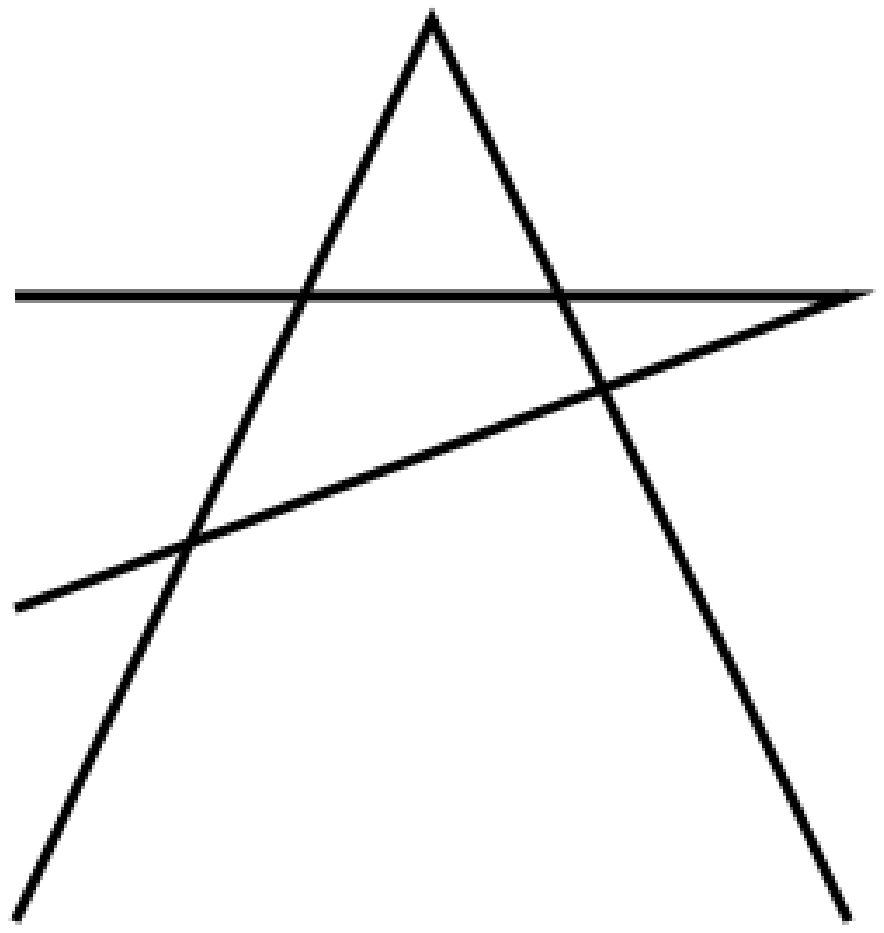}
        }
        \subfigure[]{%
           \label{fig:second}
           \includegraphics[width=0.2\textwidth]{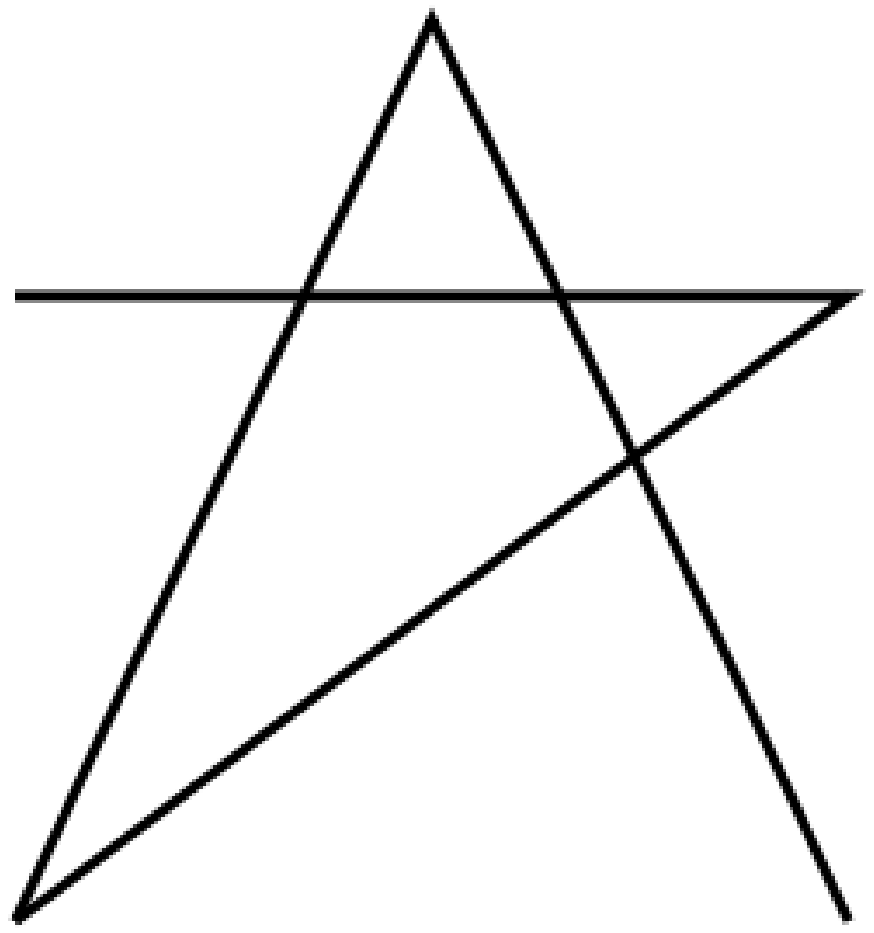}
        } 
        \subfigure[]{%
            \label{fig:third}
            \includegraphics[width=0.2\textwidth]{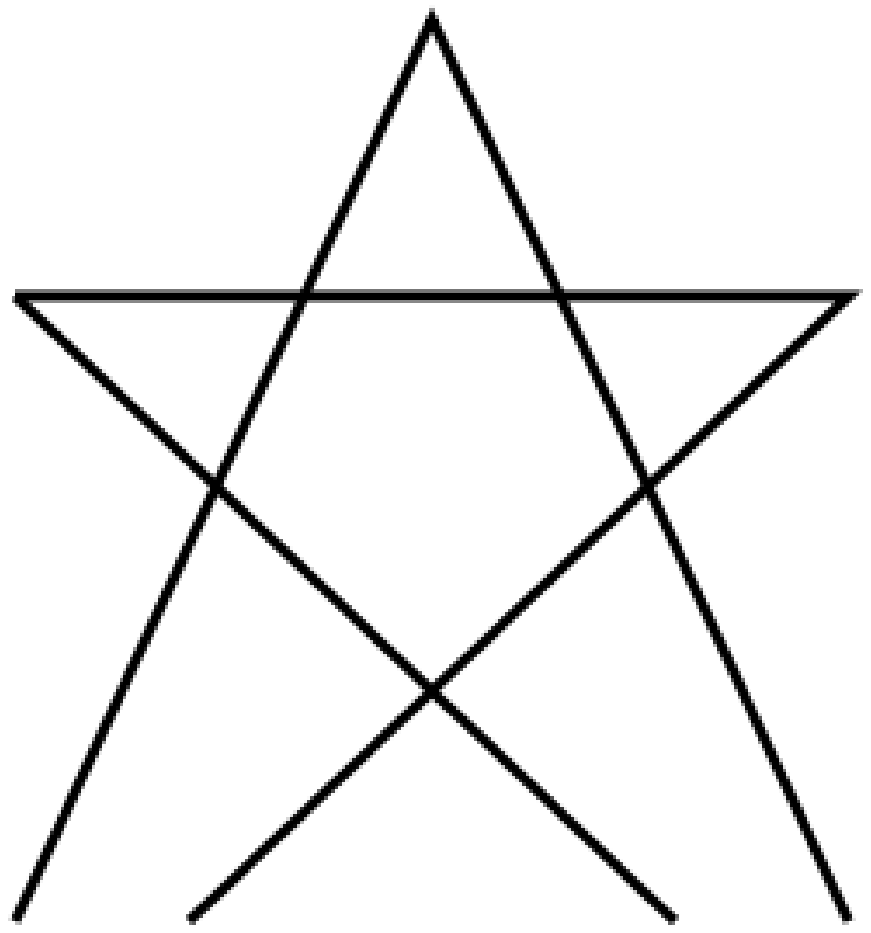}
        }
        \subfigure[]{%
            \label{fig:fourth}
            \includegraphics[width=0.2\textwidth]{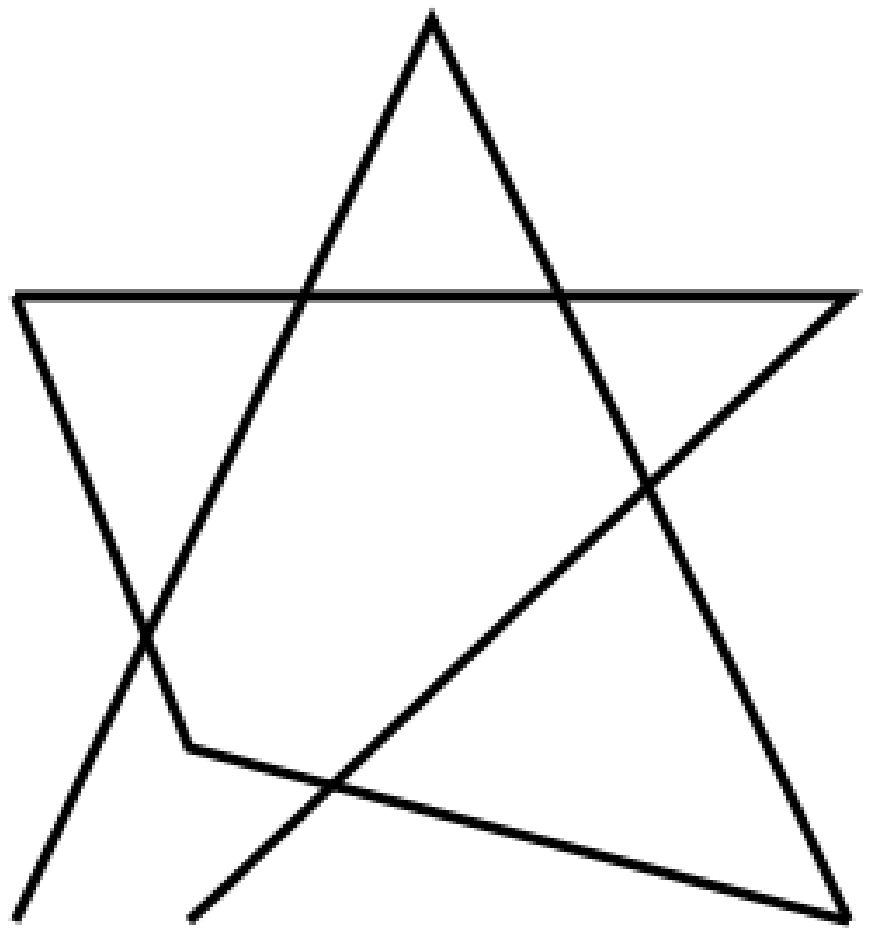}
        }
    \end{center}
    \caption{Portions of shadows with nonrealizable resolutions}
   \label{badshadows}
\end{figure}

\begin{figure}[ht!]
    \begin{center}

      \subfigure[]{%
         \label{fig:first}
        \includegraphics[width=0.2\textwidth]{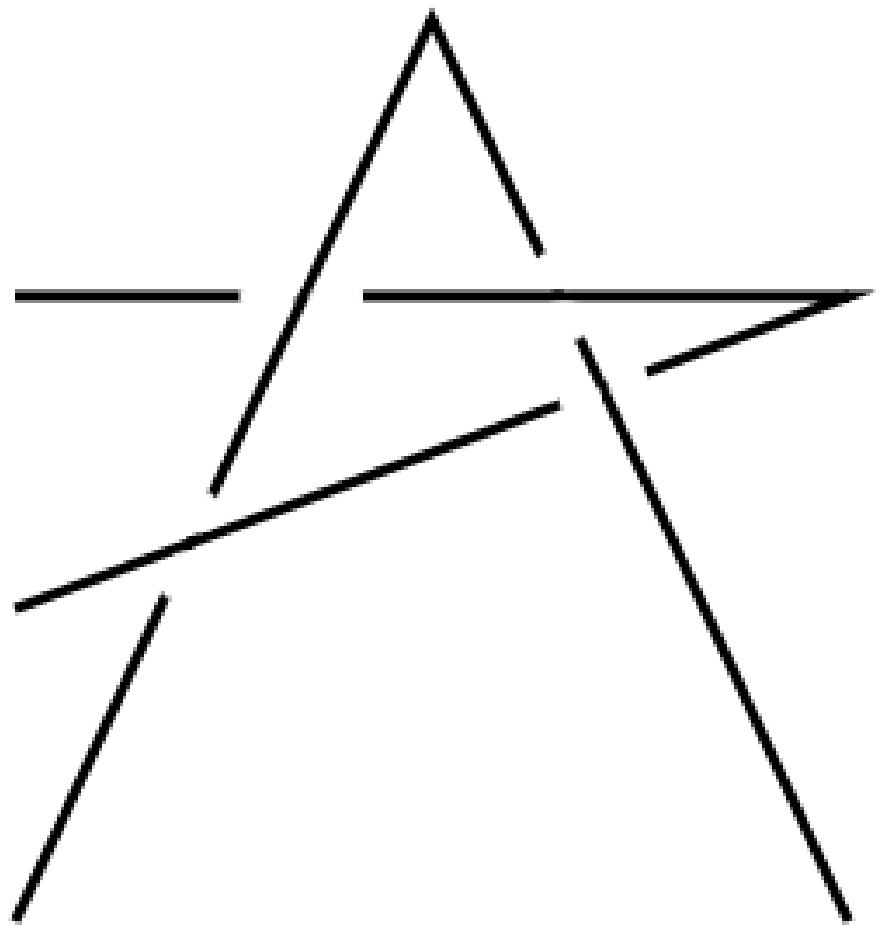}
        }
        \subfigure[]{%
           \label{fig:second}
           \includegraphics[width=0.2\textwidth]{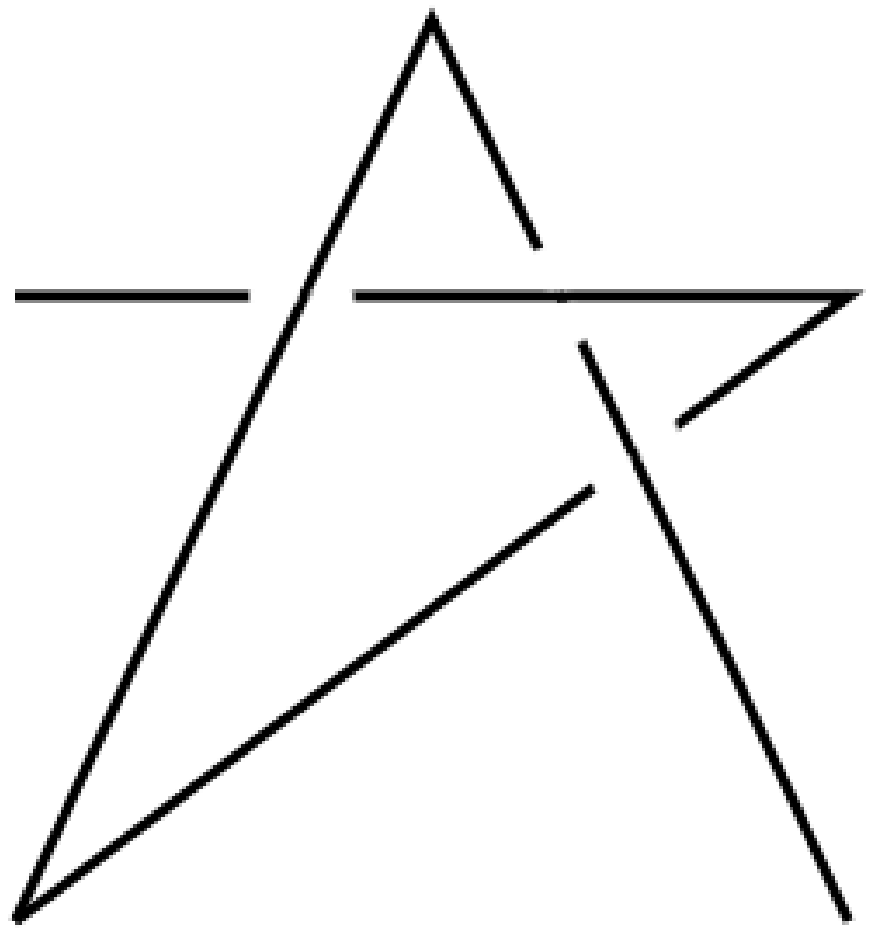}
        } 
        \subfigure[]{%
            \label{fig:third}
            \includegraphics[width=0.2\textwidth]{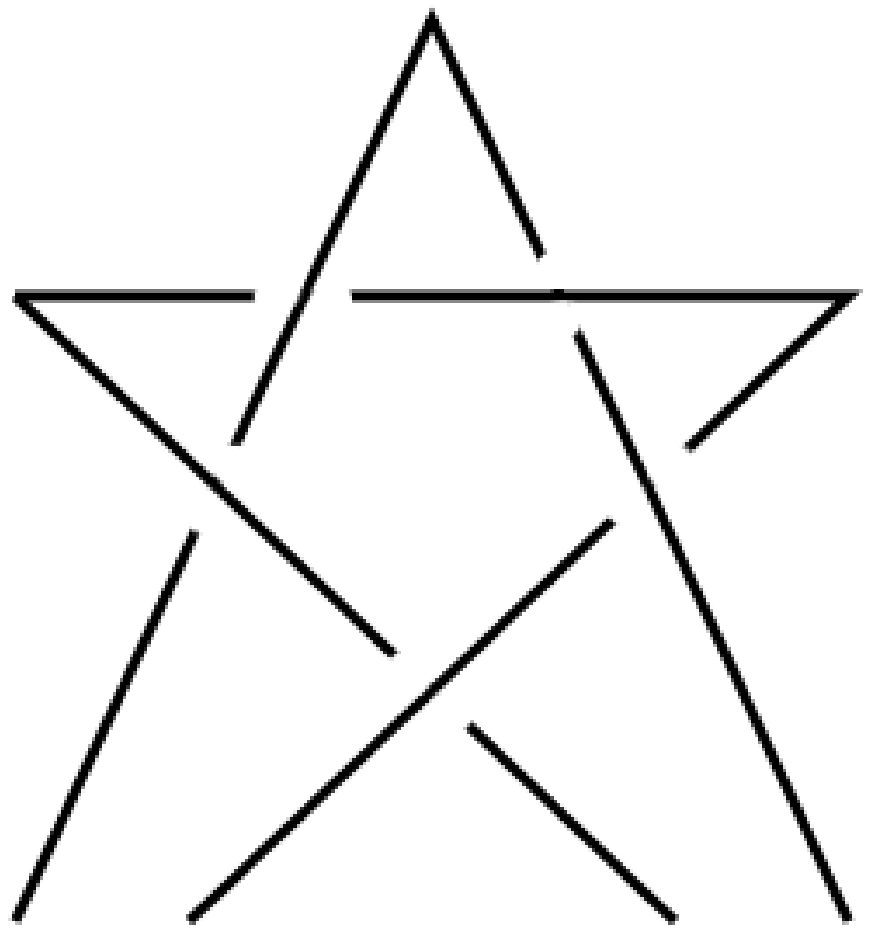}
        }
        \subfigure[]{%
            \label{fig:fourth}
            \includegraphics[width=0.2\textwidth]{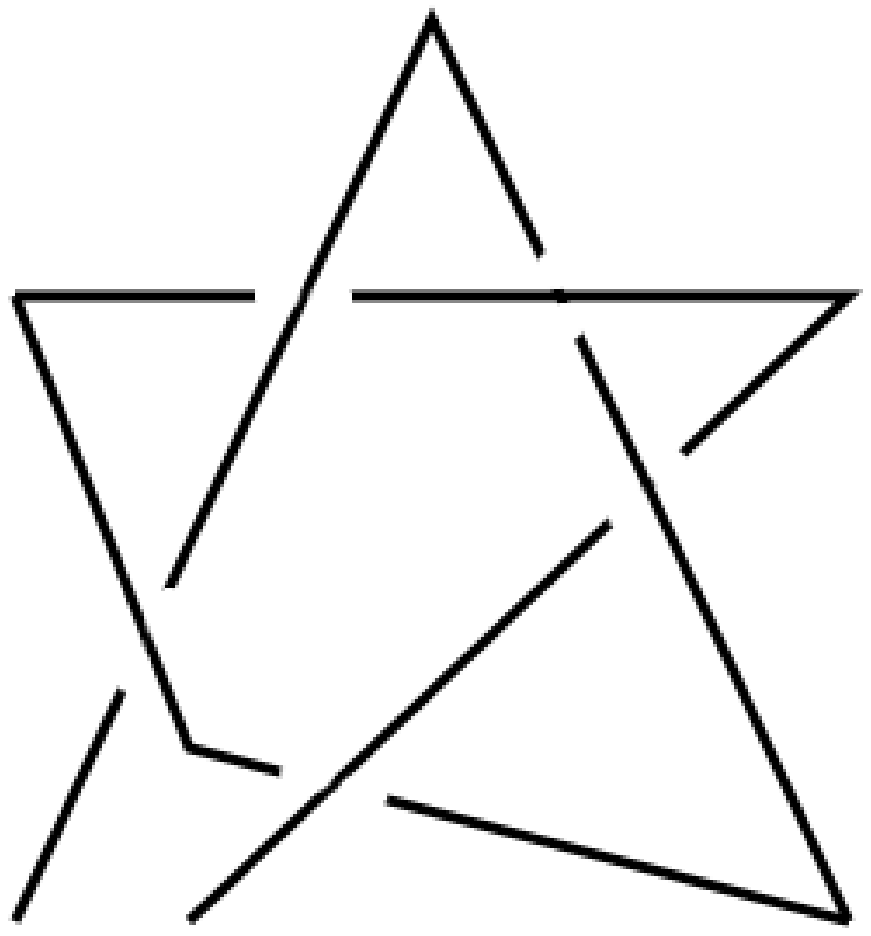}
        }
    \end{center}
    \caption{Nonrealizable resolutions of Fig. \ref{badshadows}}
   \label{badshadowresolutions}
\end{figure}

Are there other shadows with nonrealizable resolutions?  Note that a resolution $R$ of a shadow $S$ is nonrealizable if an edge of $R$ is forced to ``bend;" that is, there exists a plane that the edge crosses yet the edge has both of its endpoints on the same side of the plane.  The following theorem, our main result, proves that Lem. \ref{badshadowlemma} is a complete categorization of such shadows.  We will be using the following notation.  A PL shadow $S$ consists of $n$ distinct points \\ $v_1$, $v_2$, ..., $v_n$, where $v_i = (x_i,y_i,0)$, in the plane and $n$ linear segments, called edges, $e_i = v_{i-1}v_{i}$ (considering the $v_i$ cyclically), so that any two edges that intersect must do so transversally.  A resolution of $S$ is an assignment to each $v_i$ a point in $\mathbb{R}^3$, $\overline{v_i} = (x_i,y_i,z_i)$, so that no two resolved edges $\overline{e_i} = \overline{v_{i-1}} \hspace{2pt} \overline{v_{i}}$ intersect except at their endpoints.  

\begin{theorem} \label{badshadowtheorem}
Any shadow that has nonrealizable resolutions must have a portion of it isotopic to one of the figures in Fig. \ref{badshadows}. 
\end{theorem}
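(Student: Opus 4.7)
The plan is direct: assume $S$ is a PL shadow admitting a nonrealizable resolution $R$, and produce a portion of $S$ isotopic to one of the four configurations in Fig. \ref{badshadows} (or a mirror image). The ``forced to bend'' criterion recalled just before the theorem statement tells me that some resolved edge $\overline{e}$ and some plane $\Pi\subset\mathbb{R}^3$ satisfy the following: the over/under data at the crossings on $e$ requires $\overline{e}$ to cross $\Pi$, yet both endpoints of $\overline{e}$ lie on the same side of $\Pi$. Since $\Pi$ is determined by finitely many height assignments and the contradiction involves only the over/under data at crossings on $\overline{e}$, only a small finite collection of vertices and edges participates in producing the obstruction. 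The first task is to localize this sub-shadow and reduce to a minimal such configuration, which I expect to involve $\overline{e}$, a couple of edges crossing $\overline{e}$, and possibly an edge adjacent to $\overline{e}$ at one of its endpoints.

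The core of the argument is then a case analysis on the combinatorics of this minimal obstruction. The cases split on (i) the number of crossings lying on the forced edge $\overline{e}$, (ii) whether the edges crossing $\overline{e}$ share endpoints with $\overline{e}$ or with one another, and (iii) the cyclic order of the involved vertices around the planar projection of $\overline{e}$. For each combinatorial type I would try either to exhibit height assignments that satisfy the full linear system of over/under inequalities (thereby eliminating that case), or to identify the forced planar pattern as an isotopic copy of one of the four diagrams in Fig. \ref{badshadows}. Mirror symmetry lets reflected configurations be treated together, as already permitted by the statement.

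The main obstacle is the exhaustiveness of this enumeration. One must rule out any larger or more intricate configurations that initially look like novel obstructions but in fact already embed one of the four pictured sub-shadows, and conversely one must confirm that every genuinely minimal obstruction appears on the list. Verifying that no subtle fifth minimal obstruction slips past the case analysis is where the proof demands the most care; this is essentially the content of the theorem beyond the comparatively elementary Lemma \ref{badshadowlemma}.
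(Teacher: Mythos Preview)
Your outline heads in a workable direction but misses the organizing idea that the paper actually uses, and without that idea your exhaustiveness worry is not merely a technicality but the whole difficulty.

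The paper does not start from a bent edge $\overline{e}$ and enumerate local configurations around it. Instead it classifies the \emph{planes} that could witness a bend. The key observation is a dichotomy: a plane spanned by two adjacent resolved edges $\overline{e_i},\overline{e_{i+1}}$ is forced on you no matter what heights $z_j$ you choose, whereas any plane $P$ \emph{not} containing a pair of adjacent edges can be defeated --- for any edge $\overline{e_j}$ meeting the projected region of $P$, at least one endpoint $\overline{v_{j-1}}$ or $\overline{v_j}$ is free (not pinned by $P$), so a vertical translation $(x_j,y_j,z_j)\mapsto(x_j,y_j,z_j+\epsilon_j)$ puts the endpoints on opposite sides of $P$ without changing the knot type. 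Hence every genuine obstruction is already witnessed by some adjacent-edge plane, and the problem reduces to asking which planar arrangements of edges crossing a fixed pair of adjacent edges can produce a nonrealizable resolution. That bounded enumeration is exactly what was carried out in \cite{9}, yielding the four patterns of Fig.~\ref{badshadows}.

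By contrast, your plan fixes a plane $\Pi$ causing the bend but never constrains where $\Pi$ comes from. The claim that ``only a small finite collection of vertices and edges participates'' is not yet justified: until you know $\Pi$ is spanned by adjacent edges, there is no a priori bound on which vertices determine $\Pi$ or how many crossings on $\overline{e}$ are implicated, so your case split on (i)--(iii) has no finite starting point. If you insert the adjacent-edge-plane reduction as a first step, your enumeration collapses to the one in \cite{9} and the argument goes through; without it, the proof as sketched does not close.
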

\begin{proof}
If $S$ is a shadow with resolution $R$, using the above notation, then there are two types of planes to consider: those formed by two adjacent edges of $R$ and those not containing two adjacent edges of $R$.  If $e_i$ and $e_{i+1}$ are two adjacent edges of $S$, then $\overline{e_i}$ and $\overline{e_{i+1}}$ will always create a plane in $\mathbb{R}^3$, no matter if the vertices of $R$ are translated or not.  Thus, we must determine what resolutions are nonrealizable with regards to these planes.  That is, starting with two adjacent edges of $S$, what other arrangements of edges of $S$ could lead to potentially nonrealizable resolutions?  This has been done \cite{9}, yielding the four cases of Fig. \ref{badshadows}.

Now suppose a plane $P$ does not contain adjacent edges of $R$.  Let us assume $P$ does contain an edge $\overline{e_i} = \overline{v_{i-1}} \overline{v_i}$ of $R$ and a third point $p$, not on $\overline{e_{i-1}}$ or $\overline{e_{i+1}}$ of $R$ (lest $P$ contains two adjacent edges of $R$).  $P$ can be projected to the $xy$-plane.  If $e_j$ is an edge of $S$ intersecting this region, then we can guarantee $\overline{v_{j-1}}$ and $\overline{v_j}$ lie on opposite sides of $P$, since not both $\overline{v_{j-1}}$ and $\overline{v_j}$ are endpoints of $\overline{e_i}$ or the edge $p$ lies on. The points $\overline{v_{j-1}}$ and $\overline{v_j}$ can be isotoped ($(x_j,y_j,z_j)$ of $R$ can be translated to $(x_j,y_j,z_j + \epsilon_j)$, for example, where $\epsilon_j \in \mathbb{R}$) while still preserving the knot type of $R$. See Fig. \ref{plane}.  

The above argument holds for any plane not containing two adjacent edges of $R$, and thus, the result holds.
\end{proof}

\begin{figure}[ht!]
	    \begin{center}
        \includegraphics[width=0.3\textwidth]{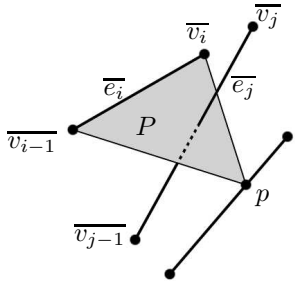}
        \put(-123,50){$\overline{v_{i-1}}$}
        \put(-87,70){$\overline{e_{i}}$}
        \put(-56,91){$\overline{v_{i}}$}
        \put(-98,15){$\overline{v_{j-1}}$}
        \put(-30,98){$\overline{v_{j}}$}
        \put(-39,71){$\overline{e_{j}}$}
        \put(-30,30){$p$}
        \put(-75,55){$P$}
        \end{center}
    \caption{Disregarding planes not formed by adjacent edges} 
   \label{plane}
\end{figure}

We immediately see that all resolutions of one category of shadows are realizable.

\begin{corollary}
The shadow of the PL $(n,2)$-torus knot, for $n$ odd, $n \geq 7$ (as pictured in Fig. \ref{torus}) has $2^n$ realizable resolutions.
\end{corollary}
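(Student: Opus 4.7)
The plan is to invoke Theorem \ref{badshadowtheorem}, which says that a shadow has nonrealizable resolutions if and only if some portion of it is isotopic to one of the four diagrams in Fig. \ref{badshadows} (or a mirror). Hence, to show that all $2^n$ resolutions of the $(n,2)$-torus knot shadow are realizable, it suffices to argue that, for $n$ odd with $n \geq 7$, this shadow (as in Fig. \ref{torus}) contains no such bad sub-portion.

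I would first record the combinatorics of the shadow. It is isotopic to the star polygon $\{n/2\}$: $n$ vertices placed cyclically around a circle together with $n$ edges, each joining vertices two apart. The figure carries an $n$-fold rotational symmetry, so the search for a bad sub-portion reduces to examining the neighborhood of one chosen vertex $v_0$. At $v_0$ exactly two edges emanate; using the star combinatorics, each of these edges meets exactly two non-adjacent edges transversally, and both of those crossings lie toward the center of the star, away from $v_0$.

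The main step is then a case-by-case comparison against the four configurations of Fig. \ref{badshadows}. For each configuration I would record its minimal edge data (the edges present, which pairs share a crossing, and the relative positions of the crossings along each edge) and argue that this data cannot be reproduced inside the $(n,2)$-torus knot shadow when $n \geq 7$. The underlying qualitative reason is that the pentagram ($n=5$) is uniquely pinched: adjacent edges meet at a very sharp angle and both of their crossings sit near the shared vertex. For $n \geq 7$ the opening angle at each star-point widens substantially and the relevant crossings sit noticeably farther from the vertex, so no sub-portion of the shadow is planar-isotopic to any of the four bad diagrams.

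The principal obstacle is carrying out the four case checks with sufficient care. One must confirm not only that the combinatorial incidence of crossings near $v_0$ differs from each bad configuration, but also that no planar isotopy of a sub-portion of the shadow carries it to one of the bad diagrams. The $\mathbb{Z}_n$-symmetry of the shadow and the finite number of edge-pairs to consider reduce the essentially different cases to a handful, keeping each check tractable.
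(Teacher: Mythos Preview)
Your proposal is correct and follows the same route as the paper: invoke Theorem~\ref{badshadowtheorem} and observe that for odd $n\ge 7$ the star-polygon shadow contains no sub-portion isotopic to any of the configurations in Fig.~\ref{badshadows}. The paper's own proof is in fact a single sentence asserting precisely this, so your outline of the symmetry reduction and case checks is already more detailed than what appears there.
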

\begin{proof}
For such $n$, these shadows contain no portion of their diagrams isotopic to those in Fig. \ref{badshadows}.
\end{proof}

\begin{figure}[ht!]
	\begin{center}
    \subfigure[]{%
         \label{fig:first}
        \includegraphics[width=0.2\textwidth]{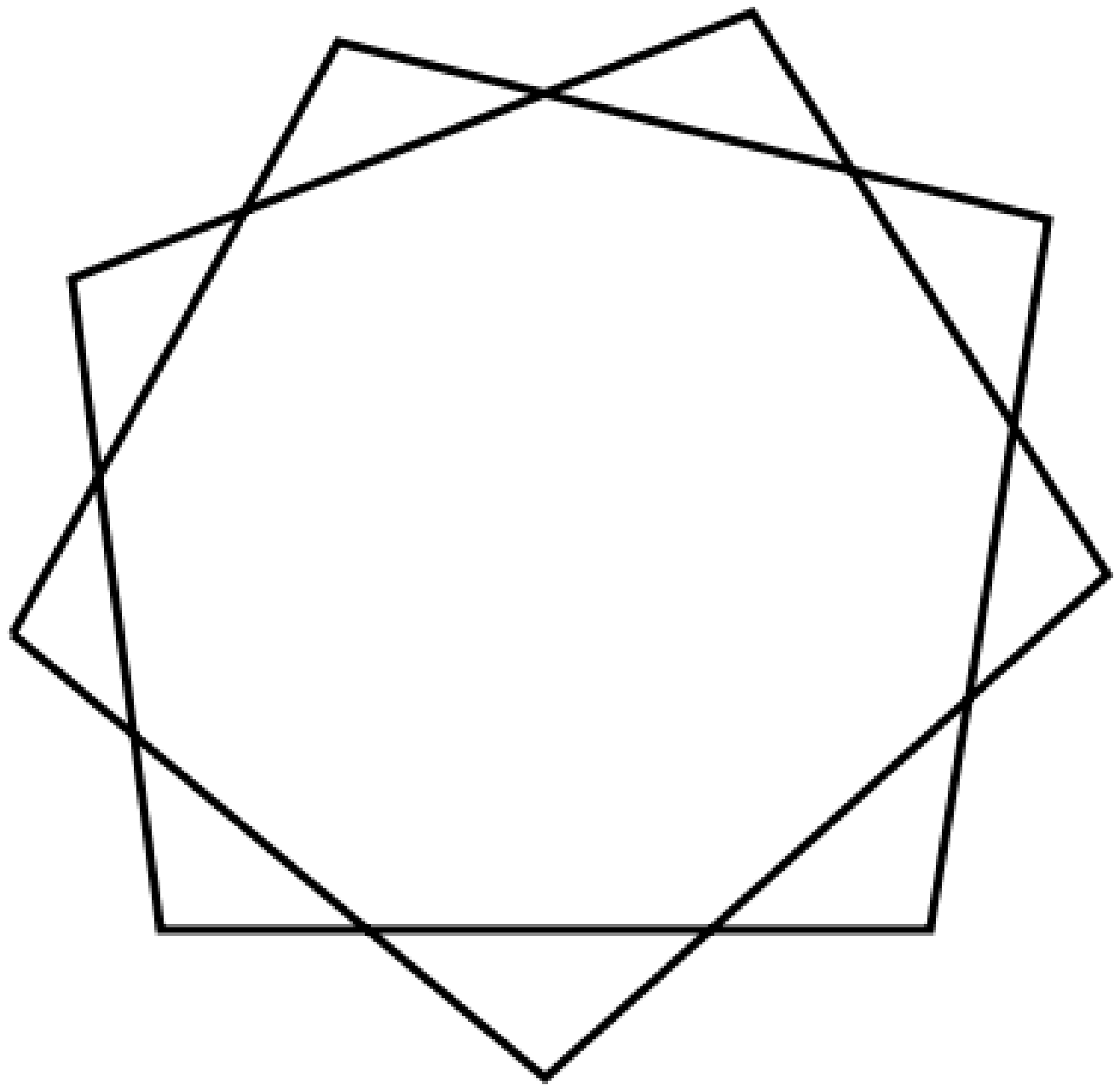}
        }
        \subfigure[]{%
           \label{fig:second}
           \includegraphics[width=0.2\textwidth]{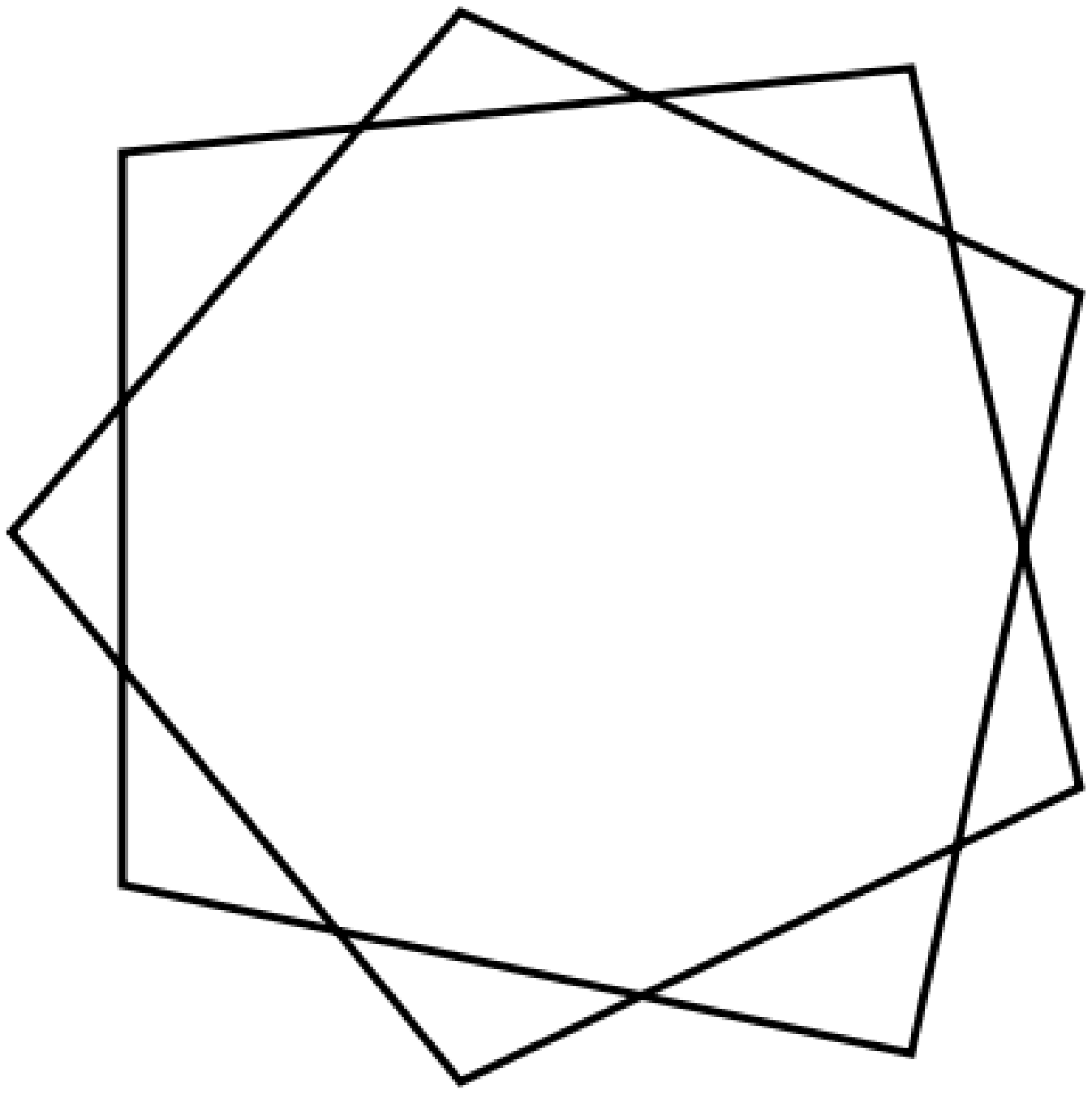}
        } 
    \end{center}
    \caption{Shadows of the PL $(7,2)$- and $(9,2)$-torus knots, respectively} 
   \label{torus}
\end{figure}

If one starts with the shadow of Fig. \ref{5star}(a) and begins choosing resolutions for the precrossings, with the goal of creating a realizable resolution, then there may come a point when there is not a choice of resolution for a particular crossing.  It may happen that, in order to realize the resolution, the precrossing is forced to be assigned one particular type of crossing.  This idea introduces the following two definitions.

\begin{definition} 
Let $D$ be a PL pseudodiagram with $P$ the set of precrossings of $D$.  Then, $S \subset P$ is said to \textit{force} $D$ if there exists an assignment of crossing information to $s \in S$ so that all crossings of $P - S$ must be resolved one particular way in order to realize the resolution of $D$ in $\mathbb{R}^3$.
\end{definition}

\begin{definition} 
If $D$ is a PL pseudodiagram, then the \textit{forcing number of D}, $f(D)$, is the size of the smallest set of precrossings of $D$ that forces $D$.
\end{definition}

\begin{lemma} \label{starlemma}
If $D$ is the shadow of Fig. \ref{5star}, then $f(D) = 2$.
\end{lemma}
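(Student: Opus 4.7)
The plan is to prove $f(D)=2$ by separately verifying $f(D)\leq 2$ and $f(D)\geq 2$.

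For the lower bound $f(D)\geq 2$, I would combine the WeRe-set computation for $D$ with a symmetry argument. Reflecting $\mathbb{R}^3$ across the projection plane inverts the crossing information at every precrossing and preserves PL realizability, so the $20$ realizable resolutions of $D$ are partitioned into $10$ mirror pairs, each containing one resolution with value $+$ at any given precrossing $c$ and one with value $-$. Consequently, fixing either value of $c$ leaves exactly $10$ realizable completions among the $2^4=16$ possibilities. Since $10>1$, no assignment on a single precrossing can force $D$.

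For the upper bound $f(D)\leq 2$, I would exhibit an explicit $S=\{c_i,c_j\}$ together with an assignment on $S$ that leaves exactly one realizable completion. Label the precrossings $c_1,\ldots,c_5$ cyclically, where $c_i$ is the crossing between the pentagram edges $e_i$ and $e_{i+2}$. The natural candidate is $S=\{c_1,c_3\}$, the two crossings sharing the edge $e_3$, assigned so that $e_3$ passes over both $e_1$ and $e_5$. For each of the $2^3=8$ completions on $\{c_2,c_4,c_5\}$, I would determine realizability by either (i) exhibiting a sub-shadow of the resulting diagram isotopic to one of the four patterns of Fig.~\ref{badshadows} and invoking Lemma~\ref{badshadowlemma} to conclude nonrealizability, or (ii) producing explicit heights $z_1,\ldots,z_5$ at the five vertices that simultaneously satisfy the five linear inequalities encoding the resolution's over/under data. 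If exactly seven completions fall into case (i) and exactly one into case (ii), then $S$ forces $D$ and $f(D)\leq 2$.

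The main obstacle is this eight-way case analysis on the upper bound: each nonrealizable completion must be matched against one of the four forbidden configurations in Fig.~\ref{badshadows}, and the single realizable completion must be certified by an explicit choice of heights. Should the initial candidate $\{c_1,c_3\}$ fail (by admitting two or more realizable completions), the $5$-fold rotational symmetry of $D$ and the adjacent-versus-non-adjacent distinction reduce the essentially distinct candidate pairs to only a handful of cases to retry.
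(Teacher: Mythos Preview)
Your lower bound argument is correct and, in fact, more explicit than the paper's: the paper simply asserts ``it is clear that no pseudodiagram has forcing number $1$'' without further justification, whereas your mirror-symmetry count actually supplies a reason.

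For the upper bound your route diverges from the paper's. The paper does \emph{not} perform an eight-way enumeration. Instead it chooses two specific precrossings (shown in Fig.~\ref{5starforce}(a)) and argues \emph{sequentially}: each of two further precrossings is forced because the opposite choice would produce, locally, the nonrealizable configuration of Fig.~\ref{badshadowresolutions}(b) (an instance of Lemma~\ref{badshadowlemma}), independently of how the remaining precrossings are set; once those two are fixed, the same local obstruction forces the fifth. Thus the paper never needs to test completions one by one or to exhibit explicit heights for the surviving resolution---every elimination is a direct appeal to a single forbidden local pattern.

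Your plan is not wrong in principle, but two points deserve attention. First, your chosen pair $\{c_1,c_3\}$---the two crossings on a single edge $e_3$, with $e_3$ passing over at both---is not the pair the paper uses, and you should not expect it to succeed: making one edge ``on top'' at both of its crossings does not by itself create the adjacent-edge obstruction that drives the cascade in the paper's argument, so you will likely find more than one realizable completion and be thrown back on your fallback search. Second, even if a suitable pair is found, the brute-force check of eight cases is strictly more work than the paper's method, and it obscures the mechanism (namely, that each forced crossing is pinned down by a \emph{two-crossing} local pattern from Fig.~\ref{badshadows}). The cascading argument is what you are missing; adopting it would let you dispense with both the case split and the explicit height certificate.
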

\begin{proof}
It is clear that no pseudodiagram has forcing number $1$, so it suffices to find a set of two precrossings of $D$ that forces $D$.  Choose the resolutions of the two crossings as pictured in Fig. \ref{5starforce}(a).  By TLem. \ref{badshadowlemma}, two precrossings are forced to be resolved a certain way, as in Fig. \ref{5starforce}(b), for if either resolution were switched, regardless of how the remaining crossings are resolved, the resolution of the shadow would be nonrealizable (see Fig. \ref{badshadowresolutions}(b)).  Once resolved, by a similar argument, the final precrossing is forced to be resolved as in Fig. \ref{5starforce}(c).
\end{proof}

\begin{figure}[ht!]
    \begin{center}

      \subfigure[]{%
         \label{fig:first}
        \includegraphics[width=0.2\textwidth]{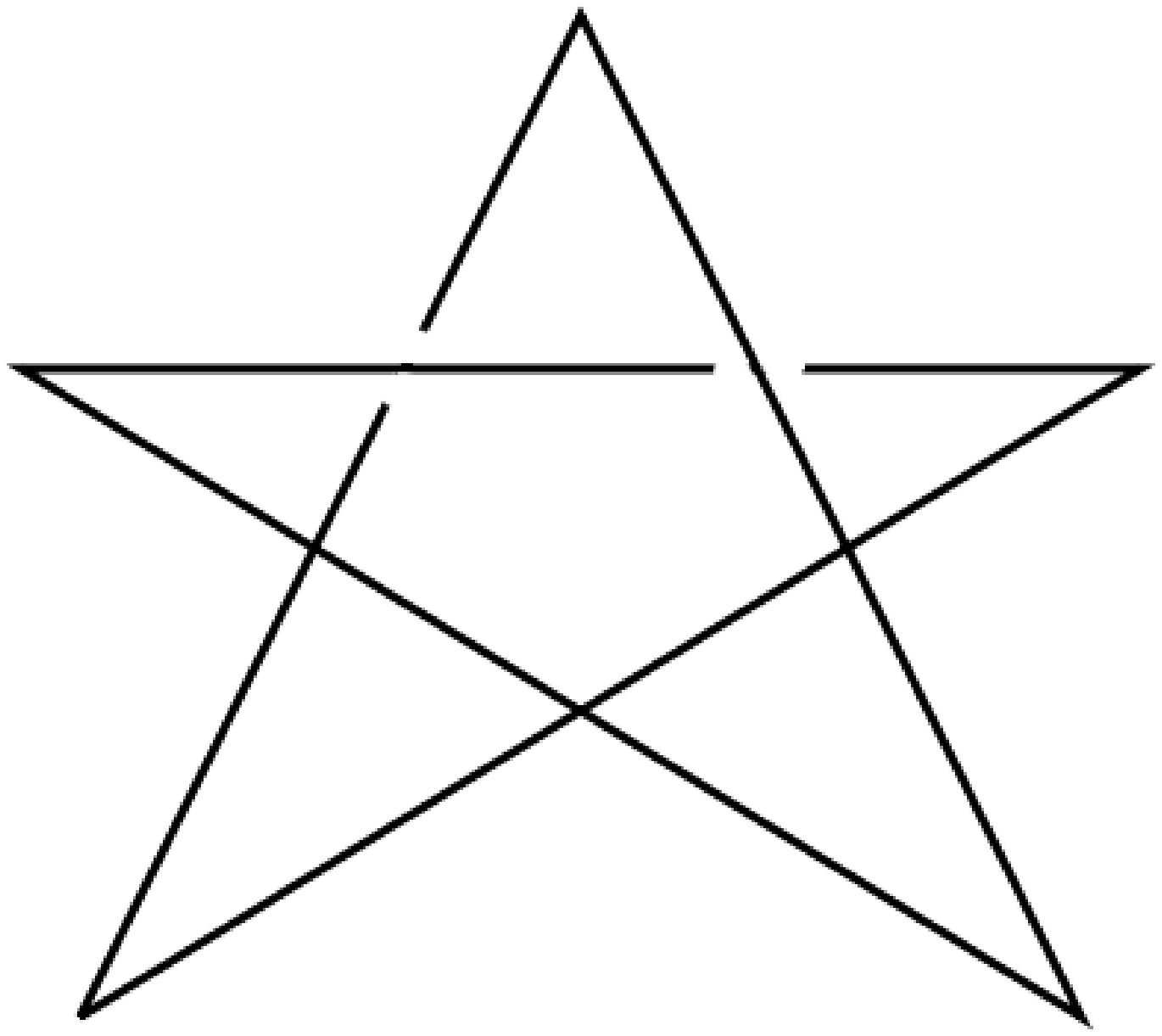}
        }
        \subfigure[]{%
           \label{fig:second}
           \includegraphics[width=0.2\textwidth]{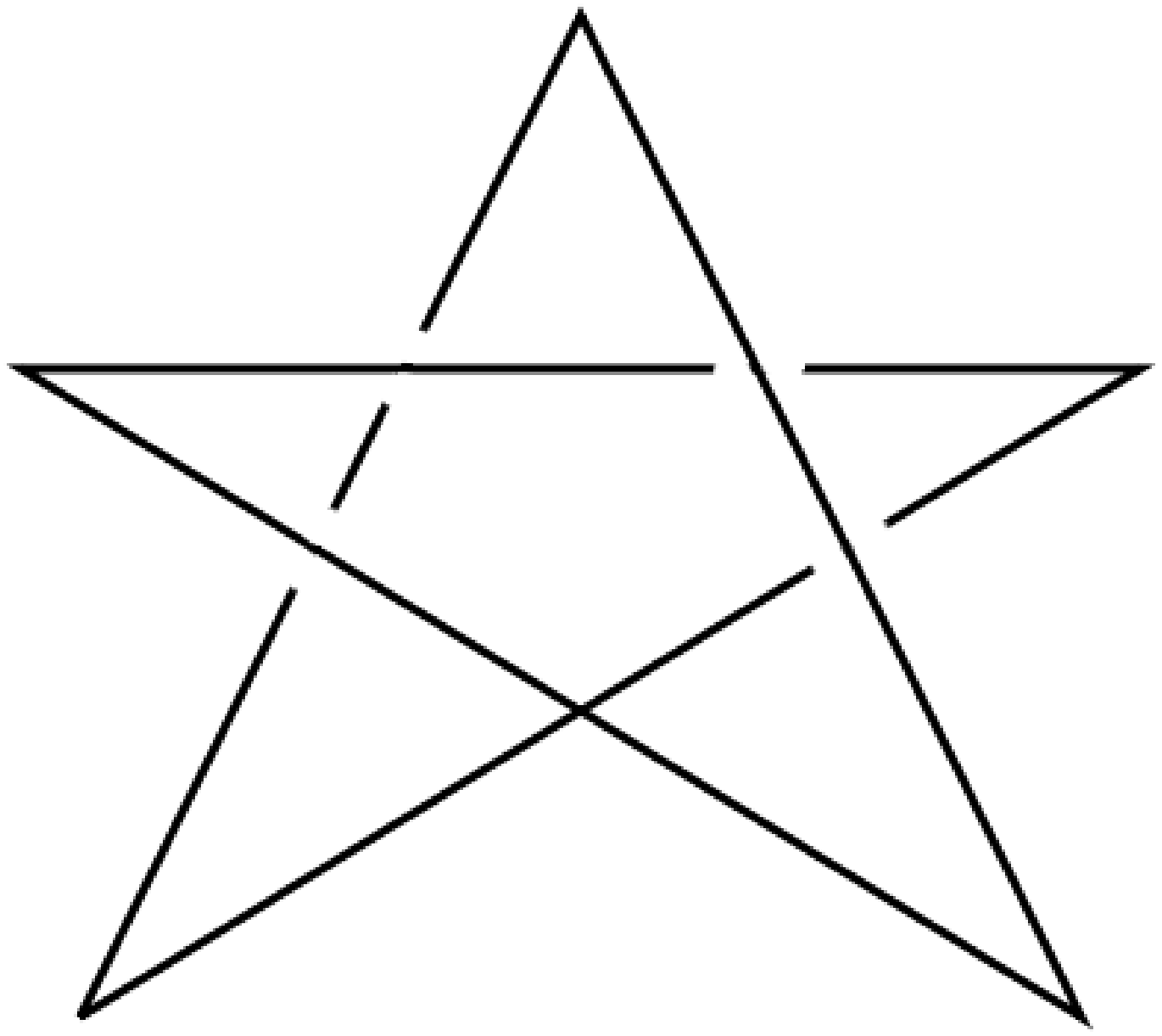}
        } 
        \subfigure[]{%
            \label{fig:third}
            \includegraphics[width=0.2\textwidth]{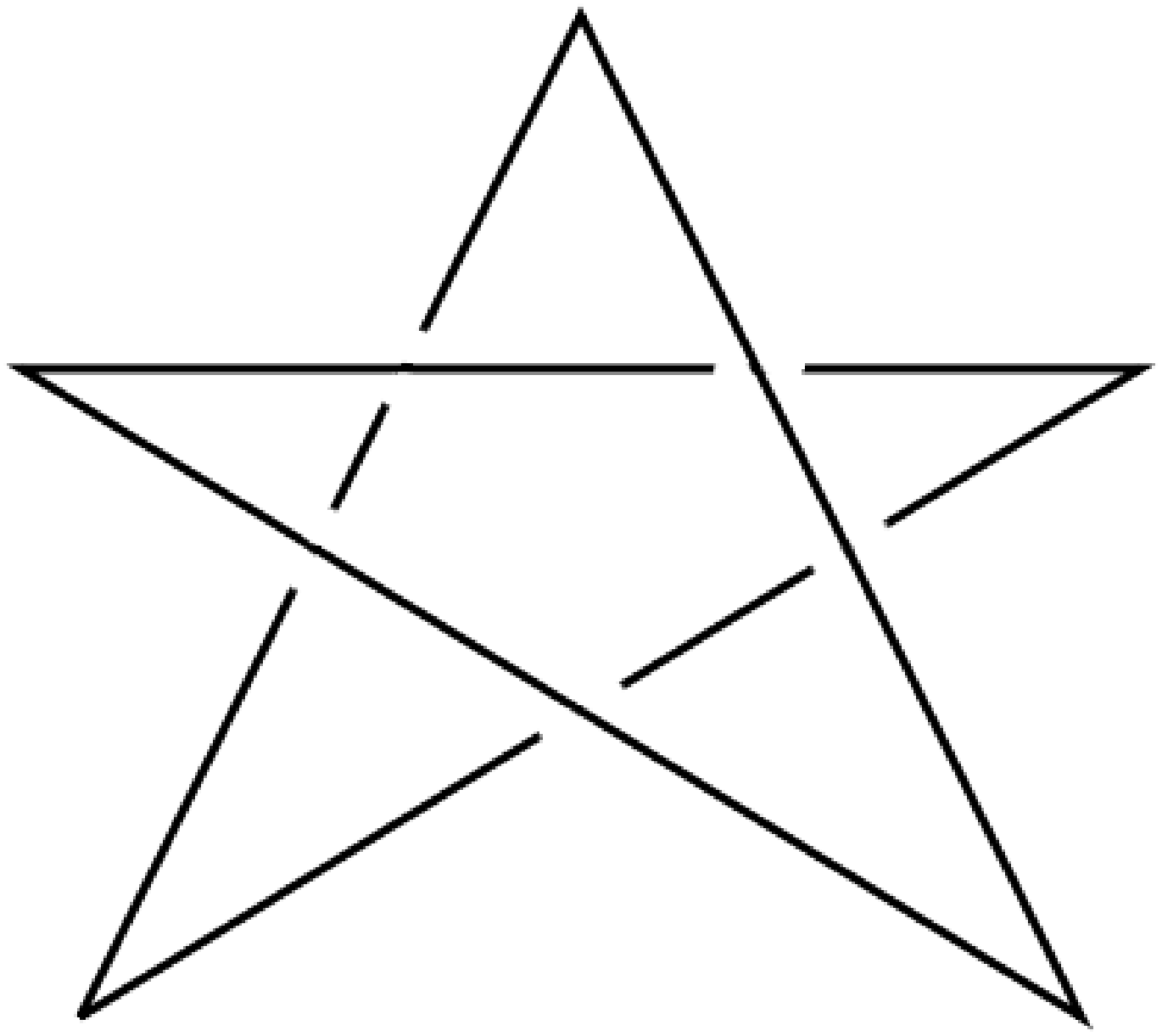}
        }
    \end{center}
    \caption{Forcing Fig. \ref{5star}(a)}
   \label{5starforce}
\end{figure}

What if a shadow contains multiple portions isotopic to those in Fig. \ref{badshadow}?

\begin{corollary}
Let $S$ be a shadow with $n$ portions of it isotopic to those appearing in Fig. \ref{badshadows}.  If $m$ is the maximum number of crossings of $S$ that can be forced, then

\begin{equation}
m \leq n.
\end{equation}
\end{corollary}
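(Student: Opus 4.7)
The plan is to attribute each forced crossing to a distinct witness bad portion, making the count of forced crossings bounded by the number of bad portions $n$. Throughout, I identify a forcing scenario with a set $T \subset P$ of resolved precrossings such that the complement $F := P - T$ is entirely forced; then $m = |F|$, and the goal becomes showing $|F| \leq n$.

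First, I would attach a witness to each forced crossing. For each $c \in F$, the definition of ``forced'' says that flipping $c$ alone (with all other crossings in $T \cup (F - \{c\})$ held fixed) converts the realizable resolution into a nonrealizable one. By Theorem \ref{badshadowtheorem}, any nonrealizability in the resulting PL diagram must be caused by a portion isotopic to one of the four shadows in Fig. \ref{badshadows}. I would fix one such portion and call it the witness $B_c$ of $c$. Each $B_c$ sits among the $n$ bad portions of $S$.

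Second, I would show that $c \mapsto B_c$ is injective. Suppose two distinct forced crossings $c_1 \neq c_2$ shared the witness $B$. The current realization of $B$ is realizable, but flipping $c_1$ alone and flipping $c_2$ alone both produce nonrealizable resolutions of $B$; these two altered resolutions differ from each other in exactly the two crossings $c_1$ and $c_2$. By inspection of the four configurations catalogued in Fig. \ref{badshadowresolutions}, each bad portion has only one nonrealizable resolution (up to the mirror that reverses all crossings, which cannot be reached from a single realizable resolution by a one-crossing flip, since the small number of crossings in each portion guarantees that the current realizable resolution and its bit-complement bad resolution differ in more than one crossing). Hence only one crossing of $B$ can be forced via $B$ in any single state, giving $c_1 = c_2$ and the desired contradiction.

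Finally, injectivity of $c \mapsto B_c$ into the collection of $n$ bad portions yields $m = |F| \leq n$. The main obstacle is the injectivity step: it rests on the structural claim that each of the four portions in Fig. \ref{badshadows} admits essentially one nonrealizable resolution and that this resolution is at Hamming distance greater than one from its complement, so that only a single crossing of the portion can be the ``pivot'' forced by that portion at any one time. Verifying this requires a direct inspection of Fig. \ref{badshadowresolutions}, but once established, the bound follows immediately.
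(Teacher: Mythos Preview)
Your approach is essentially the paper's: associate to each forced precrossing a witnessing bad portion from Fig.~\ref{badshadows}, and argue that each bad portion can account for at most one forced precrossing, so $m\le n$. The paper's proof states exactly this, but tersely---it simply asserts ``This is true for each region of $S$'' without spelling out why a single region cannot force two precrossings.

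You go further by attempting to \emph{prove} the injectivity, and that extra step is where a genuine gap appears. Your injectivity argument rests on the claim that each portion in Fig.~\ref{badshadows} admits a \emph{unique} nonrealizable resolution up to global mirror. But the paper itself writes ``one of Fig.~\ref{badshadowresolutions} (\emph{or other nonrealizable resolutions} of Fig.~\ref{badshadows})'', explicitly allowing a portion to have several nonrealizable resolutions beyond the displayed one and its mirror. If a portion $B$ has two nonrealizable resolutions $R_1,R_2$ that are each Hamming distance~$1$ from the same realizable state $Q$, with the flips occurring at distinct crossings $c_1,c_2$, then both $c_1$ and $c_2$ are forced with witness $B$, and your map $c\mapsto B_c$ fails to be injective. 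Your triangle-inequality remark only rules this out when $R_2=\overline{R_1}$; it says nothing about other nonrealizable patterns. So the ``direct inspection of Fig.~\ref{badshadowresolutions}'' you defer to is not merely a formality---you would actually need to enumerate \emph{all} nonrealizable resolutions of each of the four local configurations and check that no two of them are at Hamming distance~$2$ through a common realizable state. The paper does not carry this out either, so your proposal is no weaker than the original proof, but you should not present the uniqueness claim as if it were already established.
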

\begin{proof}
Note that a precrossing in a pseudodiagram $S$ can be forced only if the other choice of resolution for it results in a nonrealizable resolution.  The only situation in which this could occur is if a portion of $S$ is isotopic to one of  Fig. \ref{badshadowresolutions} (or other nonrealizable resolutions of Fig. \ref{badshadows}) with one of the classical crossings yet still a precrossing.  Then, there is only one possibility for resolving this precrossing, to yield a realizable resolution.  This is true for each region of $S$ isotopic to one of Fig. \ref{badshadows}, proving the result.
\end{proof}

\section{Future Questions} \label{future}
There are numerous questions that these concepts naturally lead to.  In particular, a few of them are as follows.  Are there other relationships between smooth and piecewise-linear pseudodiagrams?  Do patterns emerge in WeRe-sets, much like those found in the smooth case \cite{4}?  Are there deeper relationships between the forcing number and piecewise-linear virtual knots?  In \cite{2}, the topology of $n$-sided polygons embedded in $\mathbb{R}^3$, for small values of $n$, is explored.  Understanding any connections between those spaces and forcing number may lead to a better understanding of the space's topology for higher values of $n$.  Lastly, the concept of forcing number may potentially lead to stronger bounds on the edge index \cite{1,8} of PL knots, a fundamental question in PL knot theory.

\end{document}